\documentclass[10pt]{article}

\usepackage[T1]{fontenc}

\usepackage{amsmath}
\usepackage{amsthm}
\usepackage{amssymb}
\usepackage{latexsym}

\usepackage{enumitem}

\usepackage[small, pagestyles]{titlesec}
\usepackage[small, it]{caption}

\usepackage[square,comma,numbers,sort&compress]{natbib}

\theoremstyle{plain}
\newtheorem{theorem}{Theorem}[section]
\newtheorem{proposition}[theorem]{Proposition}

\newtheorem{lemma}[theorem]{Lemma}

\renewenvironment{abstract}
  {
    \begin{list}{}%
      {\setlength{\rightmargin}{1in}%
       \setlength{\leftmargin}{1in}}%
    \item[]\ignorespaces\begin{small}
  }
  {
    \end{small}\unskip\end{list}
  }

\newpagestyle{main}[\small]{
  \headrule
  \sethead[\usepage][][]
          {\sc Chains and unique transitive orientations of prime graphs}{}{\usepage}
}

\titleformat{\section}
  {\large\sc}
  {\thesection.}{1em}{}

\title{\sc Chains and unique transitive orientations of prime graphs}

\author{
  Robert Brignall%
  \footnote{The Open University, Milton Keynes, England, UK.},
  \quad
  Sean Mandrick%
  \footnote{University of Florida, Gainesville, Florida, USA.},
  \quad
  and
  \quad
  Vincent Vatter%
  \footnotemark[2]
}

\date{\today}

\begin{document}

\maketitle

\setcounter{footnote}{2}

\begin{abstract}
We give a short, conceptual proof that prime graphs have at most two transitive orientations, a much-quoted result of Gallai. Our proof uses chains, introduced by Chudnovsky, Kim, Oum, and Seymour, which provide a transparent characterization of primality. Transitivity induces a forcing relation on edges; using chains, we show that any two edges of a prime graph are equivalent under this relation, and thus any transitive orientation is unique up to reversal.
\end{abstract}

\pagestyle{main}

\section{Introduction}
\label{sec:intro}

In his seminal 1967 paper \emph{Transitiv orientierbare Graphen}~\cite{gallai:transitiv-orien:}, Gallai established the theory of modular decomposition in graphs and proved, among many results, that prime graphs have at most two transitive orientations (Theorem~\ref{thm:gallai} below). Gallai's argument is part of a much larger structural analysis. Our aim in this note is to give a short self-contained proof using chains, introduced by Chudnovsky, Kim, Oum, and Seymour~\cite{chudnovsky:unavoidable-ind:}. Chains provide a local characterization of primality (Proposition~\ref{prop:chains-exist}), and their use exposes the mechanism underlying Gallai's result: transitivity induces a relation on edges, and primality ensures that all edges are equivalent under this relation.

We now set up the definitions required to state the main result. Throughout, all graphs are finite, simple, and nonempty. A \emph{graph} $G$ has vertex set $V(G)$ and edge set $E(G)$, and we write $u\sim v$ to mean that $uv$ is an edge of $G$ (and $u\nsim v$ if it is not). Given a set $S\subseteq V(G)$ and a vertex $v\notin S$, we say that $v$ \emph{agrees} on $S$ if $v$ is adjacent to all of the vertices of $S$ or to none of them, and that $v$ \emph{disagrees} on $S$ otherwise. A \emph{module} of $G$ is a set $M\subseteq V(G)$ such that every vertex outside~$M$ agrees on $M$. Every set of size $0$, $1$, or $|V(G)|$ is a \emph{trivial module}; a graph is \emph{prime} if it has no nontrivial modules.

A \emph{transitive orientation} of $G$ is an orientation of its edges such that whenever $a\to b$ and $b\to c$ are arcs, so is $a\to c$. Given three vertices $a$, $b$, $c$ forming an induced path $a\sim b\sim c$ with $a\nsim c$, transitivity permits only the orientations $a\leftarrow b\rightarrow c$ and $a\rightarrow b\leftarrow c$, so the orientation of either edge forces that of the other.

This observation motivates the following relations on $E(G)$. For edges $e$ and $f$, write $e\wedge f$ if $e=f$, or if $e$ and $f$ share an endpoint and the other two endpoints are nonadjacent. We define an equivalence relation on $E(G)$ by declaring that two edges $e$ and $f$ are \emph{equivalent} if they are connected by a sequence $e=e_0\wedge e_1\wedge\cdots\wedge e_t=f$. The equivalence classes of this relation are the \emph{edge classes} of $G$. Since the $P_3$ forcing above propagates along such sequences, in any transitive orientation, the direction of one edge determines the directions of all edges in its edge class.

\begin{theorem}[Gallai~{\cite[Satz~1.8, Item~5]{gallai:transitiv-orien:}}\footnote{Theorem~3.1.8, Item~5 in the translation~\cite{gallai:a-translation-o:}.}]
\label{thm:gallai}
If a graph is prime, then any two of its edges belong to the same edge class. In particular, a prime graph has at most two transitive orientations, and if there are two, they are reversals of each other.
\end{theorem}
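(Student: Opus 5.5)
The plan is to fix one edge class $K$ and grow a vertex set inside which every edge lies in $K$, using primality to keep extending it until it is all of $V(G)$. This gives the first statement of Theorem~\ref{thm:gallai}. The second statement then follows immediately from the $P_3$ forcing discussed before the theorem. Since all edges are in one class, fixing the direction of a single edge determines the whole orientation. So there are at most two transitive orientations, and they are reversals of each other.

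\textbf{Step 1 (connectivity).} I would first record that the subgraph formed by the edges of an edge class is connected, because consecutive edges in a $\wedge$-sequence share an endpoint.

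\textbf{Step 2 (the span is everything).} Let $S$ be the set of endpoints of edges in $K$. I claim $S$ is a module. Suppose $x\notin S$ is adjacent to some $u\in S$ and nonadjacent to some $w\in S$. Walking along a $K$-path from $u$ to $w$, we find consecutive vertices $a\sim b$ with $ab\in K$, $x\sim a$ and $x\nsim b$. Then $xa\wedge ab$, so $xa\in K$ and $x\in S$, a contradiction. Since $|S|\ge 2$ and $G$ is prime, $S=V(G)$. Note that a prime graph with at least two vertices is connected or co-connected; the two-vertex case with a single edge is trivial.

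\textbf{Step 3 (the chain argument).} Step~2 is not enough on its own: a second class could also span $V(G)$. For the real argument I would use chains as in Proposition~\ref{prop:chains-exist}. Start from an edge $v_1v_2\in K$. Because no proper subset of size at least $2$ is a module, we can order the vertices as $v_1,v_2,\dots,v_n$ so that each $v_i$ ($i\ge3$) disagrees on $P_{i-1}=\{v_1,\dots,v_{i-1}\}$. Ideally the chain structure is stronger: $v_i$ is adjacent to $v_{i-1}$ and to none of $v_1,\dots,v_{i-2}$, or the complementary pattern holds. The induction hypothesis is that every edge of $G[P_{i-1}]$ lies in $K$. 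When $v_i$ is added, I must show each new edge $v_ic$ with $c\in P_{i-1}$ lies in $K$. Two mechanisms do this:
\begin{itemize}[nosep]
\item if $c$ has a neighbour $d\in P_{i-1}$ with $v_i\nsim d$, then $v_ic\wedge cd$ and $cd\in K$;
\item otherwise, if some neighbour $c'$ of $v_i$ already has $v_ic'\in K$ and $c\nsim c'$, then $v_ic\wedge v_ic'$.
\end{itemize}

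\textbf{Main obstacle.} The hard part is this inductive step: ensuring that one of the two mechanisms always applies. The trouble is that $G[P_{i-1}]$ need not be connected, so the path argument from Step~2 can fail. My first attempt would be to exploit the specific alternating adjacency pattern of a chain. In a chain, $v_i$ disagrees on $P_{i-1}$ in a controlled way, essentially distinguishing $v_{i-1}$ from everything earlier. This should let a short case analysis, on whether $v_i\sim v_{i-1}$ and whether $v_{i-1}$ has earlier neighbours, produce the needed $\wedge$-link. If the chain form is too weak, I would instead run Step~2's module argument on each component of $G[P_{i-1}]$ together with its complement.
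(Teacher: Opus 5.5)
Your Steps~1 and~2 are correct, and so is deriving the orientation statement from the first sentence. But the proof rests on Step~3, which you leave as a hope, and the invariant you propose there cannot be maintained.

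Your two mechanisms are the only possible $\wedge$-links between a new edge $v_ic$ and edges inside $P_i$. So your induction can only succeed if, at every stage, all edges of $G[P_i]$ lie in a single edge class of the induced graph $G[P_i]$. This already fails along a genuine chain. Take $p_1\sim p_2$, then $p_3$ co-pendant ($p_3\sim p_1$, $p_3\nsim p_2$), then $p_4$ co-pendant ($p_4\sim p_1,p_2$, $p_4\nsim p_3$). These four vertices induce a paw in which $\{p_2,p_4\}$ is a module, so the edge $p_2p_4$ is $\wedge$-related to no other edge among them. Concretely, mechanism one fails because $p_2$'s only neighbour in $P_3$ is $p_1\sim p_4$, and mechanism two fails because $p_4$'s only other neighbour is $p_1\sim p_2$. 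The fact that $p_2p_4\in K$ can only be witnessed by a later vertex distinguishing $p_2$ from $p_4$, which a prefix induction never consults.

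You also give no rule for choosing a better ordering, and the spanning chain you hope for need not exist: in $C_5$ with vertices $1,\dots,5$ in cyclic order, every chain starting $1,2$ ends after four vertices. The fallback of applying Step~2 to components of $G[P_{i-1}]$ is too vague to evaluate, and the paw shows that no purely local version can work.

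The missing idea is to track much less. The paper first reduces, using connectivity and the trivial $P_3$ case, to incident edges $uv$, $vw$ spanning a triangle. It then takes a chain $u=p_1,v=p_2,\dots,p_m=w$ from Proposition~\ref{prop:chains-exist} and follows a single edge. Lemma~\ref{lemma:p1p2_pspm_same_edge_class} shows that $p_sp_m$ is in the class of $p_1p_2$, where $s$ is the last index with $p_s\sim p_{s+1}$: a pendant step uses $p_sp_m\wedge p_mp_{m+1}$, and a co-pendant step uses $p_sp_m\wedge p_sp_{m+1}$. Since $p_m$ is co-pendant, it is adjacent to all of $p_1,\dots,p_{m-2}$. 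Taking the least $j$ with $p_jp_m$ in the class and using a non-neighbour $p_i$ ($i<j$) of $p_j$ then drives $j$ down to $1$ or $2$ (Lemma~\ref{lemma:final}). This gives only that one of $uw$, $vw$ is in the class of $uv$ --- exactly the asymmetry the paw exhibits. A second chain, starting $v,w$ and ending $u$, settles the remaining case.
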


Note that $K_1$ and $\overline{K}_2$ are the only edgeless prime graphs, and for these the trivial orientation is the unique transitive orientation. Every other prime graph has either no transitive orientations or precisely two, one the reversal of the other.

In Section~\ref{sec:chains} we record some elementary properties, and recall the characterization of primality in terms of chains. In Section~\ref{sec:the_proof} we prove Theorem~\ref{thm:gallai}.

\section{Chains}\label{sec:chains}

A \emph{chain} in a graph $G$ is a sequence $p_1, p_2, \dots, p_m$ of~${m\ge 2}$ distinct vertices such that for all~${2\le i \le m}$, the vertex $p_i$ is either
\begin{enumerate}[nosep]
	\item[(a)] \emph{pendant}, meaning that $p_i \sim p_{i-1}$ and $p_i \nsim p_1, p_2, \dots, p_{i-2}$; or
	\item[(b)] \emph{co-pendant}, meaning that $p_i \nsim p_{i-1}$ and $p_i \sim p_1, p_2, \dots, p_{i-2}$.
\end{enumerate}
In particular, $p_i$ disagrees on $\{p_1, \dots, p_{i-1}\}$ for all $i\ge 3$. Chains%
\footnote{These are unrelated to the ``chains'' of the English translation of Gallai~\cite{gallai:a-translation-o:}, or the ``Ketten'' of Gallai's original German version~\cite{gallai:transitiv-orien:}.}
were defined by Chudnovsky, Kim, Oum, and Seymour~\cite{chudnovsky:unavoidable-ind:}.
They are the graphical analogues of \emph{proper pin sequences}, introduced independently and earlier by Brignall, Huczynska, and Vatter~\cite{brignall:decomposing-sim:} in the context of permutations. The following two results are immediate from the definition.

\begin{proposition}
\label{prop:chains-swap-first-two-elements}
If $p_1, p_2, \dots, p_m$ is a chain in $G$, then so is $p_2, p_1, p_3, \dots, p_m$.
\end{proposition}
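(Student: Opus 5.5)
We verify the definition directly for the sequence $q_1,q_2,q_3,\dots,q_m$ with $q_1=p_2$, $q_2=p_1$, and $q_i=p_i$ for $i\ge 3$. These are clearly distinct vertices, and $m\ge 2$ still holds. The plan is to check conditions (a)/(b) position by position. The key observation is that the conditions for $i\ge 3$ depend only on the \emph{set} $\{q_1,\dots,q_{i-2}\}$ and on $q_{i-1}$, and the swap does not change these for $i\ge 4$.

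First, $i=2$. We need $q_2=p_1$ to be pendant or co-pendant with respect to $q_1=p_2$. For $i=2$ the list $p_1,\dots,p_{i-2}$ is empty, so pendant just means $q_2\sim q_1$ and co-pendant just means $q_2\nsim q_1$. One of these always holds, so any two distinct vertices form a chain, and this position is fine.

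Next, $i=3$. In the original chain, $p_3$ is either pendant ($p_3\sim p_2$, $p_3\nsim p_1$) or co-pendant ($p_3\nsim p_2$, $p_3\sim p_1$). In particular $p_3$ is adjacent to exactly one of $p_1,p_2$. In the new sequence, $q_3=p_3$ is pendant exactly when $p_3\sim q_2=p_1$ and $p_3\nsim q_1=p_2$, and co-pendant in the complementary case. So the swap simply exchanges the pendant and co-pendant roles of $p_3$: an original pendant $p_3$ becomes co-pendant and vice versa. Either way, $q_3$ satisfies (a) or (b). This is the only position needing any thought, and it is still routine.

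Finally, $i\ge 4$. Here $q_{i-1}=p_{i-1}$, and $\{q_1,\dots,q_{i-2}\}=\{p_1,\dots,p_{i-2}\}$ as sets because $p_1,p_2$ both lie in this set. Hence the pendant or co-pendant condition for $q_i=p_i$ is literally the same condition as in the original chain, and it holds. There is no real obstacle here; the only care needed is the role reversal at $i=3$.
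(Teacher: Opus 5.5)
Your proposal is correct and is the direct check from the definition that the paper leaves implicit (it states this result is ``immediate from the definition'' and gives no written proof). The one point needing any care is $i=3$, where $p_3$ switches between pendant and co-pendant, and you handle it correctly.
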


\begin{proposition}
\label{prop:chains-in-complement}
If $p_1, p_2, \dots, p_m$ is a chain in $G$, then it is also a chain in the complement~$\overline{G}$.
\end{proposition}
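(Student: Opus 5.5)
The claim follows directly from the definitions: we check that the defining condition for each $p_i$ with $2\le i\le m$ still holds in $\overline{G}$. The plan is to swap the roles of the two clauses. The key point is that each clause says ``adjacent to one thing, nonadjacent to a set'' (or the reverse), and complementation exchanges adjacency and nonadjacency between distinct vertices.

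First fix $i$ with $2\le i\le m$ and suppose $p_i$ is pendant in $G$, so $p_i\sim p_{i-1}$ and $p_i\nsim p_1,\dots,p_{i-2}$ in $G$. Since the $p_j$ are distinct, in $\overline{G}$ we get $p_i\nsim p_{i-1}$ and $p_i\sim p_1,\dots,p_{i-2}$. This is exactly the co-pendant condition~(b) in $\overline{G}$. Symmetrically, if $p_i$ is co-pendant in $G$, the same complementation makes it pendant in $\overline{G}$.

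The remaining requirements carry over unchanged: the sequence still consists of $m\ge 2$ distinct vertices, and $V(\overline{G})=V(G)$. The only point needing a remark is the case $i=2$. There the set $\{p_1,\dots,p_{i-2}\}$ is empty, so the condition concerns only $p_1$: if $p_2\sim p_1$ in $G$ it is pendant, otherwise co-pendant. Either way some clause holds in $\overline{G}$, so the vacuous clauses cause no trouble. There is no real obstacle; the whole content is this interchange of clauses~(a) and~(b).
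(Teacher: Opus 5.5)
Your proof is correct and matches the paper's approach. The paper simply calls the result immediate from the definition; the point is exactly your observation that complementation interchanges clauses (a) and (b), so each pendant $p_i$ becomes co-pendant in $\overline{G}$ and vice versa.
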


Our next result requires a short argument.

\begin{proposition}
\label{prop:disconnected_chains}
For any chain $p_1, p_2, \dots, p_m$, the vertices $p_1, \dots, p_m$ induce either a connected graph or a $P_1 \cup P_{m-1}$, with $p_1$ or $p_2$ being isolated.
\end{proposition}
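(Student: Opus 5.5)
Let $H$ denote the subgraph induced by $p_1,\dots,p_m$. The plan is to induct on $m$, tracking how the components of $H_i=G[\{p_1,\dots,p_i\}]$ evolve as each new vertex is added. The key observation is about co-pendant vertices: if $p_i$ is co-pendant and $i\ge 3$, it is adjacent to $p_1,\dots,p_{i-2}$, so it lies in the same component as all of them. It is not adjacent to $p_{i-1}$, but if $i\ge 4$ then $p_{i-1}$ is adjacent to something in $\{p_1,\dots,p_{i-2}\}$ (either $p_{i-2}$ if pendant, or $p_1$ if co-pendant), so $H_i$ is connected. A pendant vertex $p_i$ is adjacent to $p_{i-1}$, so it joins the component of $p_{i-1}$ and merges nothing else.

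We first handle the base cases. For $m=2$, $H$ is either $K_2$ (connected) or $\overline{K_2}=P_1\cup P_1$, and either vertex can be called isolated. For $m=3$ we check directly. If $p_3$ is co-pendant, then $p_3\sim p_1$ and $p_3\nsim p_2$. In that case $H$ is connected when $p_1\sim p_2$, and is $P_1\cup P_2$ with $p_2$ isolated otherwise. If $p_3$ is pendant, then $p_3\sim p_2$ and $p_3\nsim p_1$. In that case $H$ is connected when $p_1\sim p_2$, and is $P_1\cup P_2$ with $p_1$ isolated otherwise.

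For the inductive step, suppose $H_{i-1}$ is either connected or $P_1\cup P_{i-2}$ with $p_1$ or $p_2$ isolated. A co-pendant $p_i$ with $i\ge 4$ makes $H_i$ connected, by the observation above. Now take $p_i$ pendant. If $H_{i-1}$ is connected, then so is $H_i$. If $H_{i-1}$ is $P_1\cup P_{i-2}$, let $x\in\{p_1,p_2\}$ be the isolated vertex. If $x$ happened to be $p_{i-1}$, that would force $i-1\le 2$, which is excluded for $i\ge 4$. So $p_i$ attaches to $p_{i-1}$, which lies in the path component, and is nonadjacent to $x$. Hence $H_i$ is $P_1\cup(\text{component of }p_{i-1}\text{ plus }p_i)$, with $x$ still isolated.

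The main obstacle is verifying that the non-isolated component is actually a path $P_{i-1}$. A pendant $p_i$ is adjacent only to $p_{i-1}$ among the earlier vertices, so it adds a leaf at $p_{i-1}$. This extends the path only if $p_{i-1}$ is an endpoint of that path. The extra claim to carry in the induction is: whenever $H_{i-1}$ is disconnected, the last vertex $p_{i-1}$ is an endpoint of the path component (or the component is a single vertex). This holds in the base case $m=3$ ($p_3$ is an endpoint of the $P_2$), and is preserved because the new pendant $p_i$ becomes the new endpoint. Disconnection can only persist through pendant steps, since a co-pendant step reconnects everything. So in the disconnected case the path component is built exactly as $p_1,p_2,p_3,\dots$ (minus the isolated vertex) with each new vertex attached to the previous one, giving $P_{m-1}$.
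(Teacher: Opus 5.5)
Your proof is correct and takes essentially the same route as the paper. The key fact in both is that for $i\ge 4$ a co-pendant $p_i$ makes the prefix connected, because $p_{i-1}$ already has a neighbour among $p_1,\dots,p_{i-2}$; hence a disconnected prefix can only be extended by pendant vertices, each attached to the previous one. The paper phrases this as a dichotomy (either some prefix is connected, or none is) rather than as an induction, and it leaves implicit the endpoint invariant that you state explicitly to show the non-isolated component is a path.
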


\begin{proof}
Assume that $m\ge 3$, as otherwise there is nothing to prove. If there is an index $k\ge 2$ such that $p_1,p_2,\dots,p_k$ induce a connected graph, then adding the later vertices one at a time preserves connectedness, since each $p_i$ for $i\ge 3$ is adjacent to at least one of $p_1,p_2,\dots,p_{i-1}$. Suppose instead that $p_1,p_2,\dots,p_k$ induce a disconnected graph for every $2\le k\le m$. In particular, $p_1\nsim p_2$, and $p_i$ must be pendant for all $i\ge 4$, since otherwise $p_i$ is adjacent to all of $p_1,\dots,p_{i-2}$, while $p_{i-1}$ has a neighbor among these vertices, making the prefix connected. Regardless of whether $p_3$ is pendant or co-pendant, it follows that $p_1,p_2,\dots,p_k$ induce a $P_1\cup P_{k-1}$ for all $2\le k\le m$, with $p_1$ or $p_2$ isolated.
\end{proof}

Chains can be seen as generalizations of paths: just as a graph is connected if and only if every pair of vertices is linked by a path, the following result shows that chains play an analogous role for primality. As the proof is short, we include it to keep this note self-contained.

\begin{proposition}[Chudnovsky, Kim, Oum, and Seymour~{\cite[Proposition~2.1]{chudnovsky:unavoidable-ind:}}]
\label{prop:chains-exist}
A graph is prime if and only if for every three distinct vertices $u$, $v$, and $w$, there is a chain starting with $u, v$ and ending with $w$.
\end{proposition}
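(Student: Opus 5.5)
We prove the two directions separately; the forward direction carries the content and rests on a closure argument.

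\emph{Sufficiency.} Suppose every triple of distinct vertices admits such a chain, and let $M$ be a module with $2\le |M|<|V(G)|$. We pick $u,v\in M$ distinct and $w\notin M$, and take a chain $u=p_1,v=p_2,\dots,p_m=w$. Because $p_1,p_2\in M$ and $p_m\notin M$, there is a least index $i\ge 3$ with $p_i\notin M$. Then $\{p_1,\dots,p_{i-1}\}\subseteq M$, and $p_i$ must agree on this set since $M$ is a module. This contradicts the fact, noted right after the definition of a chain, that $p_i$ disagrees on $\{p_1,\dots,p_{i-1}\}$ for every $i\ge 3$. Hence $G$ is prime. (If $|V(G)|\le 2$ the triple condition holds vacuously, and these small graphs are prime trivially.)

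\emph{Necessity.} Let $G$ be prime and fix distinct $u,v$. Define $S$ to be the set of vertices $x$ for which some chain starts $u,v$ and ends at $x$ (with $x=u$ and $x=v$ allowed, corresponding to prefixes), and together with it the set of vertices lying on some such chain. The cleanest formulation is to \emph{grow a single chain greedily}. We start with $C=\{u,v\}$ as the chain $u,v$. While $C\ne V(G)$, primality guarantees that $C$ is not a module (since $|C|\ge 2$), so some vertex $x\notin C$ disagrees on $C$. We would like to append $x$, but appending requires the pendant/co-pendant pattern relative to the current last vertex, not mere disagreement.

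The key step, and the main obstacle, is to handle this mismatch. The plan is \emph{not} to insist on one chain, but to show that for each target $w$ we can route to it. Concretely, we argue by contradiction. Let $A$ be the set of vertices that are the last vertex of some chain beginning $u,v$, and suppose $w\notin A\cup\{u,v\}$. We claim that $X=\{u,v\}\cup A$, or a suitable closure of it, is a nontrivial module. Given $x\notin X$ and $a\in X$, we try to show that $x$ agrees on $X$. Suppose $x$ disagrees. Take a chain $p_1=u,p_2=v,\dots,p_k$ of minimal length such that $x$ disagrees on $\{p_1,\dots,p_k\}$. By minimality, $x$ agrees on $\{p_1,\dots,p_{k-1}\}$, with all adjacent or all nonadjacent, and it disagrees with that common value at $p_k$. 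In the first case $x\nsim p_k$ and $x\sim p_1,\dots,p_{k-1}$, so $x$ is co-pendant after $p_k$. In the second case $x\sim p_k$ and $x\nsim$ all earlier vertices, so $x$ is pendant. Either way $p_1,\dots,p_k,x$ is a chain, so $x\in A$, a contradiction. For $k=2$ this requires $x$ to disagree on $\{u,v\}$, which works identically, using Proposition~\ref{prop:chains-swap-first-two-elements} if needed to order $u,v$.

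Thus every vertex outside $X$ agrees on every chain-prefix set. The remaining work is to upgrade this to agreement on all of $X$. If $x$ is, say, adjacent to $u$, then agreeing on each chain's vertex set forces $x$ adjacent to every vertex of every chain, and hence to all of $X$. So $X$ is a module containing $u,v$ and missing $w$, which contradicts primality. Every vertex other than $u,v$ therefore ends some chain starting $u,v$. The subtle point to check carefully is the minimal-prefix argument, since prefixes of chains are chains.
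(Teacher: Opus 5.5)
Your proof is correct and follows essentially the same route as the paper. In the non-prime direction you look at the first chain vertex that leaves the module. In the prime direction you show that $X=\{u\}\cup A$, which is the paper's set $R$ of vertices on chains starting $u,v$, is a module; your minimal-length argument is the paper's induction along a chain, and the fact that every chain contains $u$ gives agreement on all of $X$. The exploratory detours (the set $S$, greedy growth, the ``suitable closure'') can simply be deleted. So can the appeal to Proposition~\ref{prop:chains-swap-first-two-elements}: the $k=2$ case works verbatim, and a swap would in any case give a chain starting $v,u$ rather than $u,v$.
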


\begin{proof}
Let $G$ be a graph. First suppose that $G$ is not prime, and let~$M$ be a nontrivial module with distinct vertices $u, v \in M$. Every chain with its first two vertices inside~$M$ must stay inside~$M$: if $p_1, p_2, \dots, p_k, p_{k+1}$ is a chain with its first $k\ge 2$ vertices in $M$, then $p_{k+1}$ must disagree on $\{p_1, \dots, p_k\} \subseteq M$, so $p_{k+1}$ cannot lie outside $M$. Since $M$ is a nontrivial module, it follows that there is a vertex outside it that cannot be reached by a chain starting with $u, v$.

Conversely, suppose that $G$ is prime and let $u$ and $v$ be distinct vertices. Define~$R$ to be the set of all vertices that appear in some chain starting with $u, v$; note that $u$ and $v$ themselves belong to~$R$. We show that $R$ is a module. Let $z\notin R$ and $w\in R\setminus\{u\}$. Since $w$ appears in a chain starting with $u,v$, truncation shows that it ends such a chain, so take a chain $p_1=u, p_2=v, \dots, p_m=w$. We argue by induction that $z$ agrees on $\{p_1,\dots,p_k\}$ for each $k\le m$. For $k=1$ this is trivial, and for~${k\ge 2}$, if~$z$ agrees on $\{p_1,\dots,p_{k-1}\}$, then $z$ must have the same adjacency to $p_k$ as to $p_1,\dots,p_{k-1}$ (otherwise $p_1,\dots,p_k,z$ would be a chain, but then $z$ would lie in $R$). Taking $k=m$, this shows that $z$ has the same adjacency to $w$ as to $u$. Since $w\in R\setminus\{u\}$ was arbitrary, $z$ agrees on $R$, proving that $R$ is a module. Since $G$ is prime and $|R|\ge 2$, this forces $R=V(G)$, and the result follows.
\end{proof}

\section{Proof of Gallai's Theorem~\ref{thm:gallai}}
\label{sec:the_proof}

To prove Theorem~\ref{thm:gallai}, it suffices to show that any two distinct incident edges belong to the same edge class. Such a pair joins three vertices that induce either a $P_3$ or a $K_3$. The $P_3$ case is immediate from the definition of edge classes, while for $K_3$ we construct a chain using Proposition~\ref{prop:chains-exist}, and then use the chain's structure to propagate the $\wedge$-relation. This propagation is carried out in the following two lemmas, the first of which is auxiliary to the second.

\begin{lemma}
\label{lemma:p1p2_pspm_same_edge_class}
Let $p_1, p_2, \dots, p_m$ be a chain in a graph with $p_1 \sim p_2$, and let $s$ be the greatest index~${i<m}$ with $p_i\sim p_{i+1}$. Then, the edge $p_s p_m$ exists and lies in the same edge class as the edge~$p_1 p_2$.
\end{lemma}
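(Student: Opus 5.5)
The plan is to prove the statement for every prefix of the chain at once, by induction along the chain. For $2\le k\le m$, let $s(k)$ be the greatest index $i<k$ with $p_i\sim p_{i+1}$. Because $p_1\sim p_2$, this index exists and satisfies $1\le s(k)\le k-1$. Then $s(m)=s$. Define $e_k$ to be the pair $p_{s(k)}p_k$. I will show two things for every $k$: that $e_k$ is an edge, and that it lies in the same edge class as $p_1p_2$. The base case is $k=2$. There $s(2)=1$ and $e_2=p_1p_2$, so there is nothing to prove.

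For the inductive step, take $3\le k\le m$ and suppose $e_{k-1}=p_{s'}p_{k-1}$ is an edge equivalent to $p_1p_2$, where $s'=s(k-1)\le k-2$. I will show $e_k\wedge e_{k-1}$ directly. Since $k\ge 3$, the vertex $p_k$ is either pendant or co-pendant, and I split into these two cases.

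\emph{Pendant case.} Here $p_k\sim p_{k-1}$, so $s(k)=k-1$ and $e_k=p_{k-1}p_k$ is an edge. It shares the endpoint $p_{k-1}$ with $e_{k-1}$. The other endpoints are $p_{s'}$ and $p_k$. Since $s'\le k-2$ and $p_k$ is pendant, we have $p_k\nsim p_{s'}$. Hence $e_k\wedge e_{k-1}$.

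\emph{Co-pendant case.} Here $p_k\nsim p_{k-1}$, so $s(k)=s'$ and $e_k=p_{s'}p_k$. Because $s'\le k-2$ and $p_k$ is co-pendant, $p_k\sim p_{s'}$, so $e_k$ is an edge. It shares the endpoint $p_{s'}$ with $e_{k-1}$. The other endpoints are $p_k$ and $p_{k-1}$, which are nonadjacent. Hence again $e_k\wedge e_{k-1}$.

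In both cases $e_k$ is an edge equivalent to $e_{k-1}$, and hence to $p_1p_2$. Taking $k=m$ gives the lemma.

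There is no real obstacle here. The only care needed is bookkeeping. First, one must check that $s(k)\le k-2$ exactly when it is needed, namely for $s'=s(k-1)$. This is what makes the pendant condition ($p_k\nsim p_{s'}$) and the co-pendant condition ($p_k\sim p_{s'}$) applicable. Second, one must check that in each case the shared endpoint and the two non-shared endpoints are as claimed. The key conceptual choice is to track the varying edge $p_{s(k)}p_k$ rather than trying to reason about $p_sp_m$ directly.
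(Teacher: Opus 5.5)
Your proposal is correct and matches the paper's proof: both induct along the chain, tracking the edge $p_{s}p_{k}$ for each prefix. Both split into the same pendant case ($p_{s'}p_{k-1}\wedge p_{k-1}p_k$, since $p_k\nsim p_{s'}$) and co-pendant case ($p_{s'}p_{k-1}\wedge p_{s'}p_k$, since $p_{k-1}\nsim p_k$).
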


\begin{proof}
Our strategy is to extend the chain one vertex at a time, showing that each extension preserves membership in the edge class of $p_1 p_2$.

We proceed by induction on $m$. If $m = 2$, then the result is trivial. Now suppose that the result holds for a chain $p_1,p_2,\dots,p_m$. Thus, defining $s={\max\{1\le i< m : p_i\sim p_{i+1}\}}$, we know that the edges $p_s p_m$ and $p_1 p_2$ exist and lie in the same edge class. We consider the two different ways an additional vertex $p_{m+1}$ can extend this chain.

First suppose that $p_{m+1}$ is pendant, so $p_{m+1}\sim p_m$ and $p_{m+1}\nsim p_1,p_2,\dots,p_{m-1}$. Then, $m$ is the maximum index $1\le i< m+1$ such that $p_i\sim p_{i+1}$, and $p_mp_{m+1}$ is the edge we seek to place in the same edge class as $p_1 p_2$. By induction, $p_s p_m$ lies in the same edge class as $p_1 p_2$, and $p_s p_m\wedge p_m p_{m+1}$ because $p_{m+1}\nsim p_s$. Therefore, $p_m p_{m+1}$ lies in the same edge class as $p_1 p_2$, as desired.

Next suppose that $p_{m+1}$ is co-pendant, so $p_{m+1}\nsim p_m$ and $p_{m+1}\sim p_1,p_2,\dots,p_{m-1}$. Then, $s$ is still the maximum index $1\le i< m+1$ such that $p_i\sim p_{i+1}$, and $p_sp_{m+1}$ is the edge we seek to place in the same edge class as $p_1 p_2$. By induction, $p_s p_m$ lies in the same edge class as $p_1 p_2$, and $p_s p_m\wedge p_s p_{m+1}$ because $p_m\nsim p_{m+1}$. Therefore, $p_s p_{m+1}$ lies in the same edge class as $p_1 p_2$, completing the proof.
\end{proof}

\begin{lemma}
\label{lemma:final}
Let $p_1, p_2, \dots, p_m$ be a chain such that $p_1, p_2, p_m$ induce a $K_3$. Then, at least one of the edges $p_1 p_m$ or $p_2 p_m$ lies in the same edge class as $p_1 p_2$.
\end{lemma}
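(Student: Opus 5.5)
The plan is to combine Lemma~\ref{lemma:p1p2_pspm_same_edge_class} with a structural observation about $p_m$, and then use Proposition~\ref{prop:disconnected_chains} in the complement to move from the edge $p_sp_m$ it produces to $p_1p_m$ or $p_2p_m$.

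\emph{Step 1: $p_m$ is co-pendant and $m\ge 4$.} Since $p_m\sim p_1$ and $p_m\sim p_2$, the vertex $p_m$ cannot be pendant: for $m\ge 3$ pendancy requires $p_m\nsim p_1$. If $m=3$, co-pendancy would require $p_3\nsim p_2$, which is also impossible. Hence $m\ge 4$ and $p_m$ is co-pendant. This means $p_m\nsim p_{m-1}$ and $p_m\sim p_1,\dots,p_{m-2}$.

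\emph{Step 2: apply Lemma~\ref{lemma:p1p2_pspm_same_edge_class}.} We have $p_1\sim p_2$, so let $s$ be the greatest index $i<m$ with $p_i\sim p_{i+1}$. The lemma says $p_sp_m$ is an edge in the edge class of $p_1p_2$. Because $p_{m-1}\nsim p_m$, we get $s\le m-2$. If $s\in\{1,2\}$ we are done, so assume $3\le s\le m-2$. In particular $m\ge 5$.

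\emph{Step 3: move along the star at $p_m$.} For $j,k\le m-2$ the edges $p_jp_m$ and $p_kp_m$ all exist. We have $p_jp_m\wedge p_kp_m$ exactly when $p_j\nsim p_k$, that is, when $p_jp_k$ is an edge of $\overline G$. Therefore, if $p_j$ and $p_k$ lie in the same component of $\overline G[p_1,\dots,p_{m-2}]$, then $p_jp_m$ and $p_kp_m$ are in the same edge class.

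\emph{Step 4: connectivity of the complement.} The prefix $p_1,\dots,p_{m-2}$ has length at least $3$, and it is a chain in $\overline G$ by Proposition~\ref{prop:chains-in-complement}. By Proposition~\ref{prop:disconnected_chains}, $\overline G[p_1,\dots,p_{m-2}]$ is either connected or a $P_1\cup P_{m-3}$ in which $p_1$ or $p_2$ is the isolated vertex.

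In the connected case, Step~3 puts $p_sp_m$ and $p_1p_m$ in the same edge class. In the other case, one of $p_1,p_2$ is isolated and the other lies in the component containing every $p_j$ with $j\ge 3$, in particular $p_s$. Step~3 then puts $p_sp_m$ in the same class as $p_2p_m$ or $p_1p_m$, respectively. Either way, Step~2 links that edge to $p_1p_2$.

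The step I expect to be the main obstacle is the bridge from $p_sp_m$ to an edge at $p_1$ or $p_2$. The key is to recognise that $\wedge$-moves among edges at a common vertex correspond to connectivity in the complement. Proposition~\ref{prop:disconnected_chains}, applied in $\overline G$, is exactly what controls that connectivity; its exceptional case explains why the statement says ``at least one'' of the two edges.
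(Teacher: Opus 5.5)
Your proof is correct. Steps~1 and~2 match the paper: $p_m$ is co-pendant, $m\ge 4$, and Lemma~\ref{lemma:p1p2_pspm_same_edge_class} supplies an edge $p_sp_m$ with $s\le m-2$ in the class of $p_1p_2$.

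The difference is in how you bridge from $p_sp_m$ to $p_1p_m$ or $p_2p_m$. The paper takes the \emph{least} index $j\le m-2$ such that $p_jp_m$ lies in the class. If $j\ge 3$, then $p_j$ has an earlier non-neighbor $p_i$: this is $p_1$ if $p_j$ is pendant and $p_{j-1}$ if it is co-pendant. Then $p_jp_m\wedge p_ip_m$, which contradicts minimality. You instead note that, within the star at $p_m$, a $\wedge$-move is the same thing as an edge of $\overline G[p_1,\dots,p_{m-2}]$. You then use Propositions~\ref{prop:chains-in-complement} and~\ref{prop:disconnected_chains} to locate the components of that complement.

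The mechanism underneath is the same. The paper's descent is a walk in that complement along which the index strictly decreases. The fact that drives it (each $p_i$ with $i\ge 3$ has an earlier complement-neighbor) is also exactly the key step in the proof of Proposition~\ref{prop:disconnected_chains}.

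What each version buys:
\begin{itemize}
\item The paper's argument is shorter and self-contained. It needs only that one-line local fact, and in fact the paper never invokes Propositions~\ref{prop:chains-in-complement} or~\ref{prop:disconnected_chains} in its final section.
\item Your argument makes the structure explicit and yields a bit more. It shows that both $p_1p_m$ and $p_2p_m$ lie in the class unless $\overline G[p_1,\dots,p_{m-2}]$ is a $P_1\cup P_{m-3}$. That pinpoints where the ``at least one'' in the statement comes from.
\end{itemize}
Complement components give only a sufficient condition for equivalence of star edges, since other routes may exist. So do not read the exceptional case as implying that only one of the two edges is in the class.
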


\begin{proof}
Note that $p_m$ is co-pendant because $p_1, p_2, p_m$ induce a $K_3$. Thus $p_m\nsim p_{m-1}$ and $p_m\sim p_1,p_2,\dots,p_{m-2}$, and we must have $m\ge 4$ because $p_m$ is adjacent to both $p_1$ and $p_2$.

By Lemma~\ref{lemma:p1p2_pspm_same_edge_class}, there is an index $1\le j\le m-2$ such that $p_j p_m$ lies in the same edge class as $p_1 p_2$. Choose the least such index $j$. If $j\ge 3$, then by the definition of a chain, $p_j$ has a non-neighbor~$p_i$ with $i<j$. Since $p_m$ is adjacent to both $p_i$ and $p_j$ but they are not adjacent to each other, we have $p_j p_m \wedge p_i p_m$. It follows that $p_i p_m$ lies in the same edge class as $p_1 p_2$, but this contradicts the minimality of $j$. Therefore we must have $1\le j\le 2$, and the result follows.
\end{proof}

We now have all we need to prove Theorem~\ref{thm:gallai}.

\newtheorem*{theoremgallai}{Theorem~\ref{thm:gallai}}
\begin{theoremgallai}[Gallai~{\cite[Satz~1.8, Item~5]{gallai:transitiv-orien:}}]
If a graph is prime, then any two of its edges belong to the same edge class. In particular, a prime graph has at most two transitive orientations, and if there are two, they are reversals of each other.
\end{theoremgallai}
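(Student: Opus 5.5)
The proof reduces the theorem to incident pairs of edges, as outlined at the start of this section, and then assembles the lemmas. Let $G$ be prime with at least one edge. We first check that the line graph of $G$ is connected, i.e.\ that any two edges can be joined by a sequence of edges, each incident to the next. A prime graph with at least three vertices is connected, since otherwise any of its components would be a nontrivial module. The only prime graph on two vertices with an edge would be $K_2$. But $K_2$ is not prime: $\{u\}$ and $\{v\}$ are trivial modules of size~$1$, and the whole vertex set is trivial, so $K_2$ has no nontrivial modules at all and is in fact prime. Since $K_2$ has a single edge, the claim is trivial for it. Thus, in every case, the edges of $G$ form a connected family under incidence. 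It therefore suffices to show that any two distinct edges sharing an endpoint lie in the same edge class, because the equivalence then propagates along such sequences.

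Let $e = xy$ and $f = xz$ be distinct incident edges. If $y \nsim z$, then $e \wedge f$ by definition and we are done. Otherwise $x,y,z$ induce a $K_3$. We apply Proposition~\ref{prop:chains-exist} to the triple $(x,y,z)$ to obtain a chain $p_1 = x$, $p_2 = y, \dots, p_m = z$. By Lemma~\ref{lemma:final}, at least one of $xz$ or $yz$ lies in the edge class of $xy$.

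The remaining work, which I expect to be the only real obstacle, is to exclude the case where only $yz$ is obtained. For this we exploit the freedom in choosing the chain. By Proposition~\ref{prop:chains-swap-first-two-elements}, the sequence $y, x, p_3, \dots, p_m$ is also a chain, but applying Lemma~\ref{lemma:final} to it again only yields that one of $yz$, $xz$ is equivalent to $xy$, so this alone does not settle the matter. Instead, we vary the triple. Apply the $K_3$ argument to each of the three pairs of edges of the triangle $xyz$. Applied to the pair $xy, xz$, it shows that $xy$ is equivalent to $xz$ or to $yz$. Applied to the triangle via a chain starting $x,z$ and ending $y$, Lemma~\ref{lemma:final} shows that $xz$ is equivalent to $xy$ or to $zy$. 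Applied via a chain starting $y,z$ and ending $x$, it shows that $yz$ is equivalent to $yx$ or to $zx$.

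These three statements together force all three edges of the triangle into a single class. Suppose some edge, say $xy$, were alone in its class. The first statement then requires $xy$ to be equivalent to $xz$ or to $yz$, which is a contradiction. So no edge is alone in its class, and among three edges this means all three lie in one class. In particular $xz$ is equivalent to $xy$, as required.

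For the "in particular" part, recall from the introduction that in any transitive orientation the direction of one edge forces the directions of all edges in its class. Since $G$ has a single edge class, fixing the direction of one edge determines the whole orientation. That edge has only two possible directions, so there are at most two transitive orientations. If there are two, they disagree on that edge and hence on every edge, so each is the reversal of the other.
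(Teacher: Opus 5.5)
Your proof is correct and follows the paper's route: connectivity reduces the theorem to incident pairs, the $P_3$ case is immediate, and in the $K_3$ case you combine Proposition~\ref{prop:chains-exist} with Lemma~\ref{lemma:final}. You use three chains where two already suffice, as in the paper: if your first two applications don't give $xy$ equivalent to $xz$ directly, then both are equivalent to $yz$.

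Two small points to tidy. Your $K_2$ paragraph first asserts that $K_2$ is not prime before correctly concluding that it is. Also, a singleton component is a trivial module, so the nontrivial module should be the component containing an edge, or the complement of a singleton component, rather than ``any'' component.
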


\begin{proof}
Let $G$ be a prime graph. If $G$ has fewer than three vertices, there is nothing to prove. We may therefore assume that $G$ has at least three vertices, and so $G$ is connected (every disconnected graph on at least three vertices has a nontrivial module).

Because $G$ is connected, any two edges of~$G$ are joined by a sequence of incident edges, so it suffices to prove that every pair of distinct incident edges lies in the same edge class. Let $uv$ and~$vw$ be distinct incident edges of~$G$. If $u, v, w$ induce a $P_3$, then $uv\wedge vw$ and we are done. Thus we may assume that $u, v, w$ induce a $K_3$. By Proposition~\ref{prop:chains-exist}, there is a chain starting with $u, v$ and ending with $w$, and Lemma~\ref{lemma:final} implies that at least one of the edges $uw$ or $vw$ lies in the same edge class as~$uv$. We are done if this edge is $vw$, so we may assume that $uw$ lies in the same edge class as~$uv$.

Applying Proposition~\ref{prop:chains-exist} again, there is a chain starting with $v, w$ and ending with~$u$. By Lemma~\ref{lemma:final}, at least one of the edges $uv$ or $uw$ lies in the same edge class as~$vw$. Since $uw$ and $uv$ already lie in the same edge class, we conclude that $vw$ lies in the same edge class as $uv$, completing the proof.
\end{proof}

\setlength{\bibsep}{4pt}

%
%
%
%
%


%
%
%
%
%

\end{document}